\def\XXint#1#2#3{{\setbox0=\hbox{$#1{#2#3}{\int}$}
     \vcenter{\hbox{$#2#3$}}\kern-.5\wd0}}
\def \W{\textbf{W}}
\date{\today}
\numberwithin{equation}{section}
\theoremstyle{plain}
\newtheorem{theorem}{Theorem}[section]
\newtheorem{lemma}{Lemma}[section]
\newtheorem{corollary}{Corollary}[section]
\newtheorem{proposition}{Proposition}[section]
\theoremstyle{definition}
\newtheorem*{acknowledgment}{Acknowledgment}
\theoremstyle{remark}
\newtheorem{remark}{Remark}[section]
\newenvironment{pfthm2}{{\par\noindent
            \textbf{Proof of Theorem \ref{thm1}}\quad}}{}
\newenvironment{pfthm3}{{\par\noindent
            \textbf{Proof of Theorem \ref{Theo:Lorentz}}\quad}}{}
 \newcommand{\ba}{\begin{aligned}}
 \newcommand{\ea}{\end{aligned}}
\newcommand{\bR}{\mathbb R}
\renewcommand{\epsilon}{\varepsilon}
\begin{document}
\title[3D inhomogeneous incompressible Navier-Stokes equations]{Regularity criteria and Liouville Theorem for 3D inhomogeneous Navier-Stokes flows
with vacuum}

\author[J.-M. Kim]{Jae-Myoung Kim}
\address[Jae-Myoung Kim]{\newline
Department of Mathematics Education, Andong National University
\newline Andong 36729, Republic of Korea}\email{jmkim02@anu.ac.kr}

\thanks{}

\subjclass[2010]{35Q35; 76D03; 76W05  }
 \keywords{Inhomogeneous Navier-Stokes flows; Regularity criteria; Liouville theorem; Lorentz space}

\maketitle

\begin{abstract}
In this paper, we investigate the 3D inhomogeneous Navier-Stokes
flows with vacuum, and obtain regularity criteria and Liouville type
theorems in the Lorentz space if a smooth solution $(\rho,
\mathbf{u})$ satisfies suitable conditions.
\end{abstract}

\section{Introduction}
We consider the existence for solutions $(\rho,u, \pi):
Q_T\rightarrow \bR\times \bR^3\times\bR$ to the three-dimensional
incompressible magnetohydrodynamic (MHD) equations
\begin{equation}\label{IHMHD}
\left\{
\begin{array}{ll}
\partial_t\rho + u\cdot \nabla \rho = 0, \\
\rho\displaystyle u_t -\Delta u+\rho(u \cdot
\nabla) u   +\nabla \pi= 0\\
\vspace{-3mm}\\
\displaystyle \text{div} \ u =0,\\
\end{array}\right.
\,\,\, \mbox{ in } \,\,Q_T:=\bR^3\times [0,\, T).
\end{equation}
Here $\rho$ is the density function, $u$ is the flow velocity and
$\displaystyle\pi$ is the pressure. We consider the initial value
problem of \eqref{IHMHD}, which requires initial
\begin{equation}\label{MHD-30}
\rho(x,0)=\rho_0(x),\quad u(x,0)=u_0(x),  \qquad x\in\bR^3.
\end{equation}

Kazhikov \cite{Kazhikov74} proved that the inhomogeneous
Navier-Stokes equations \eqref{IHMHD}--\eqref{MHD-30} have at least
one global weak solution in the energy space for smooth data with no
vacuum. After that, Ladyzhenskaya and Solonnikov \cite{LS78} first
established the unique solvability for the system with smooth
initial data that has no vacuum, global well-posedness in two
dimensions and local well-posedness in three dimensions. Moreover,
if he initial data is small enough, then global well-posedness holds
true. Simon \cite{Simon90} constructed global weak solutions to the
system with finite energy with the statues containing vacuum. (see
e.g. Danchin \cite{Danchin03}, \cite{Danchin04} for the almost
critical Sobolev spaces and Abidi, Gui and Zhang \cite{AGZ11} for
axi-symmetric initial data and Mucha, Xue and Zheng\cite{MXZ19} and
references therein).

%

Regarding the regularity criteria for system
\eqref{IHMHD}--\eqref{MHD-30}, Kim \cite{Kim06} established the
following Serrin type condition
\[
u \in L^t(0, T;L^{s,\infty}(\bR^3)), \frac{3}{s}+\frac{2}{t} = 1, s
\in (3,\infty],
\]
in the framework of Lorentz space (see e.g. Ye and Zhang
\cite{YZ15}, Sun and Qian \cite{SQ21} for Besov space)

Recall first the definition of Lorentz spaces. Let $m(\varphi,t) $
be the Lebesgue measure of the set $\{x\in \bR^3:|\varphi(x)|> t\}$,
\emph{i.e.}
$$
m(\varphi,t):=m\{x\in \bR^3:|\varphi(x)|> t\}.
$$ We denote by the
Lorentz space $L^{p,q}(\bR^3)$ with $1\leq p$, $q\leq \infty $ with
the norm \cite{Tr}
\begin{eqnarray*}\label{poiseuille}
\|\varphi\|_{L^{p,q}(\bR^3)}=\left\{
\begin{aligned}
&\Big(\int_0^{\infty}t^q(m(\varphi,t))^{q/p}\ \frac{dt}{t}
\Big)^{1/q}<\infty,\quad \mbox{for }\ 1\leq q<\infty,\\
&\sup_{t\geq 0}\{t(m(\varphi,t))^{\frac1p}\}<\infty,\,\hspace{1.5cm}
\ \ \quad \mbox{for }\  q=\infty
\end{aligned}\right.
\end{eqnarray*}
Note that $L^{r} (\bR^3) = L^{r,r}(\bR^3) \subset L^{r,q}(\bR^3)$ for $1<r < q \leq +\infty$. \\

In this direction, first result is stated as
\begin{theorem}\label{thm1} Let $(\rho, u)$ be the unique local strong solution in time
interval $[0, T)$ to the system \eqref{IHMHD}--\eqref{MHD-30} with
initial data (1.2). Then there exists a positive constant
$\varepsilon$ such that $(\rho, u)$ is a regular solution on $(0,T]$
provided that one of the following two conditions holds
\begin{enumerate}[(A)]
 \item
   $  u \in L^{q,\infty}(0,T; L ^{p,\infty}(\mathbb{R}^{3}))$ ~and~ $$\|u\|_{L^{q,\infty}(0,T; L ^{p,\infty}(\mathbb{R}^{3}))}
     \leq\varepsilon, ~ \text{with} ~~\frac{3}{p}+\frac{2}{q}=1 , ~3<p<\infty;  $$
 \item   $$\|\nabla u\|_{L^{q}(0,T; L ^{p,\infty}(\mathbb{R}^{3}))}
     <\infty, ~ \text{with} ~~\frac{3}{p}+\frac{2}{q}=2 , ~\frac{3}{2}<p<\infty;  $$
 \end{enumerate}
\end{theorem}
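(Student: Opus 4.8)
The plan is to run the standard higher-order energy method for strong solutions, proving the a priori bounds
\[
\nabla u\in L^\infty(0,T;L^2),\qquad \sqrt{\rho}\,u_t,\ \nabla^2 u\in L^2(Q_T),
\]
after which the continuation theorem for the local strong solution upgrades it to a regular solution on $(0,T]$. Throughout I use two structural facts: the transport equation $\partial_t\rho+u\cdot\nabla\rho=0$ with $\dv u=0$ yields the maximum principle $0\le\rho(x,t)\le\|\rho_0\|_{L^\infty}$, and the basic energy identity gives $\sqrt{\rho}\,u\in L^\infty(0,T;L^2)$ and $\nabla u\in L^2(Q_T)$.

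The key differential inequality comes from testing the momentum equation with $u_t$. Since $\dv u=0$ the pressure term integrates to zero, and one obtains
\begin{equation*}
\int_{\bR^3}\rho|u_t|^2\,dx+\frac12\frac{d}{dt}\|\nabla u\|_{L^2}^2=-\int_{\bR^3}\rho\,(u\cdot\nabla)u\cdot u_t\,dx.
\end{equation*}
The right-hand side is bounded by $\|\sqrt{\rho}\,u_t\|_{L^2}\,\|\sqrt{\rho}\,(u\cdot\nabla)u\|_{L^2}$; using the $L^\infty$ bound on $\rho$ and H\"older, $\|\sqrt{\rho}\,(u\cdot\nabla)u\|_{L^2}\lesssim\|u\|_{L^p}\|\nabla u\|_{L^{2p/(p-2)}}$. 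Gagliardo--Nirenberg in $\bR^3$ gives $\|\nabla u\|_{L^{2p/(p-2)}}\lesssim\|\nabla u\|_{L^2}^{1-3/p}\|\nabla^2 u\|_{L^2}^{3/p}$, and the Stokes estimate for $-\Delta u+\nabla\pi=-\rho u_t-\rho(u\cdot\nabla)u$ with $\dv u=0$ gives $\|\nabla^2 u\|_{L^2}\lesssim\|\sqrt{\rho}\,u_t\|_{L^2}+\|u\|_{L^6}\|\nabla u\|_{L^3}$. The hypothesis $p>3$ forces the exponent $3/p<1$, so that after a Young inequality the top-order quantity (equivalently $\|\sqrt{\rho}\,u_t\|_{L^2}$) is absorbed into the left-hand side, leaving an inequality of the form $\frac{d}{dt}\|\nabla u\|_{L^2}^2+c\,\|\sqrt{\rho}\,u_t\|_{L^2}^2\le C\,\phi(t)\,\|\nabla u\|_{L^2}^2$. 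A direct computation shows the time exponent is $q=\tfrac{2p}{p-3}$ in case (A), which is exactly the Serrin relation $\tfrac3p+\tfrac2q=1$; case (B) produces instead a forcing built from $\|\nabla u\|_{L^{p,\infty}}$ with the subcritical relation $\tfrac3p+\tfrac2q=2$.

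For case (B) the forcing $\phi$ lies in $L^1(0,T)$ by the hypothesis $\nabla u\in L^q(0,T;L^{p,\infty})$; here the time integrability is genuine $L^q$, so Gr\"onwall closes the estimate with \emph{no} smallness assumption, the subcritical scaling (sum $=2$) supplying the favorable power. For case (A) the delicate point is that $u\in L^{q,\infty}(0,T;L^{p,\infty})$ is scaling-critical (sum $=1$) and the time norm is only weak-$L^q$: since the weak-$L^q$ norm is not absolutely continuous, tail integrals cannot be made small by shrinking the time interval, and this is precisely where the smallness $\|u\|_{L^{q,\infty}L^{p,\infty}}\le\epsilon$ enters. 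The spatial H\"older step must then be carried out in Lorentz spaces via O'Neil's inequality, and the critical term is absorbed into the left-hand side using the small factor $\epsilon$ rather than by a Gr\"onwall argument.

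I expect the main obstacle to be the case-(A) treatment of the borderline weak-Lebesgue norm in time: one must combine the Lorentz-space H\"older inequality with a careful tracking of the critical exponent so that the factor carrying $\|u\|_{L^{q,\infty}L^{p,\infty}}$ appears to first power and can be genuinely absorbed, rather than producing a term that Gr\"onwall alone cannot close. Once the bounds $\nabla u\in L^\infty(0,T;L^2)$ and $\sqrt{\rho}\,u_t,\ \nabla^2 u\in L^2(Q_T)$ are secured, standard bootstrapping of higher derivatives together with the local continuation theorem yields regularity on $(0,T]$.
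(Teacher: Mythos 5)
Your energy-method skeleton coincides with the paper's: you test the momentum equation with $u_t$, estimate the convective term by O'Neil's H\"older inequality in Lorentz spaces, interpolate $\|\nabla u\|_{L^{2p/(p-2)}}\lesssim \|\nabla u\|_{L^2}^{1-3/p}\|\nabla^2 u\|_{L^2}^{3/p}$, and close $\|\nabla^2 u\|_{L^2}$ via the elliptic estimate for the momentum equation; case (B), where the forcing is genuinely $L^1$ in time, is then finished by classical Gr\"onwall exactly as in the paper.

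The gap is in case (A). After the Young inequalities needed to move $\|\sqrt{\rho}\,u_t\|_{L^2}^2$ and $\|\nabla^2 u\|_{L^2}^2$ to the left, the surviving forcing is $C\|u\|_{L^{p,\infty}}^{2p/(p-3)}\|\nabla u\|_{L^2}^2=C\|u\|_{L^{p,\infty}}^{q}\|\nabla u\|_{L^2}^{2}$, i.e.\ the coefficient carries the $q$-th power of $\|u(t)\|_{L^{p,\infty}}$, not the first power, and as a function of time it lies only in $L^{1,\infty}(0,T)$. Your proposed mechanism --- that ``the critical term is absorbed into the left-hand side using the small factor $\epsilon$'' --- cannot be implemented: the smallness is of the \emph{weak-$L^q$ norm in time}, so $\|u(t)\|_{L^{p,\infty}}$ is not pointwise small and there is nothing on the left-hand side at fixed $t$ into which $\|u\|^q_{L^{p,\infty}}\|\nabla u\|^2_{L^2}$ can be absorbed; integrating in time first fails because the coefficient is not integrable. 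The paper's resolution is a specific two-step device you do not supply: for every small $\kappa>0$ one perturbs to a supercritical Serrin pair $(p_\kappa,q_\kappa)$ still on the line $3/p_\kappa+2/q_\kappa=1$ and interpolates
\begin{equation*}
\|u\|_{L^{p_\kappa,\infty}}^{q_\kappa}\leq \|u\|_{L^{p,\infty}}^{q(1-\kappa)}\|u\|_{L^{6,\infty}}^{4}\lesssim \|u\|_{L^{p,\infty}}^{q(1-\kappa)}\|\nabla u\|_{L^2}^{4},
\end{equation*}
which turns the differential inequality into $\frac{d}{dt}\phi\leq C\lambda^{1-\kappa}\phi^{1+2\kappa}$ with $\phi=\|\nabla u\|_{L^2}^2$ and $\lambda=\|u\|^q_{L^{p,\infty}}\in L^{1,\infty}(0,T)$; one then invokes the Bosia--Pata--Robinson weak Gr\"onwall lemma (Lemma \ref{2.1} of the paper), whose hypothesis $C\|\lambda\|_{L^{1,\infty}(0,T)}\leq C\varepsilon^{q}<\tfrac12$ is exactly where the smallness of $\|u\|_{L^{q,\infty}(0,T;L^{p,\infty})}$ enters. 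Without this lemma, or an equivalent substitute, your argument for case (A) does not close.
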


\begin{corollary}\label{cor1} Let $(\rho, u)$ be the unique local strong solution in time
interval $[0, T)$ to the system \eqref{IHMHD}--\eqref{MHD-30} with
initial data (1.2). Suppose that $u$ satisfies the following
conditions holds
$$\|u\|_{L^{q}(0,T; L ^{p,\infty}(\mathbb{R}^{3}))}
     <\infty, ~ \text{with} ~~\frac{3}{p}+\frac{2}{q}=1 , ~3<p<\infty,  $$
then the solution pair $(\rho, u)$ can be extended beyond time
$T>0$.
\end{corollary}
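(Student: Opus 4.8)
The plan is to deduce the corollary from Theorem \ref{thm1}(A) by exploiting the fact that the strong (Lebesgue) integrability in time, $u \in L^{q}(0,T;L^{p,\infty})$, makes the relevant norm absolutely continuous, so that its smallness near the endpoint $t=T$ comes for free. Concretely, I would first observe that since $\int_0^T \|u(t)\|_{L^{p,\infty}(\bR^3)}^q\,dt < \infty$, absolute continuity of the Lebesgue integral yields, for the constant $\varepsilon$ furnished by Theorem \ref{thm1}, a time $T_* \in (0,T)$ such that
\[
\|u\|_{L^q(T_*,T;L^{p,\infty}(\bR^3))} = \Big(\int_{T_*}^T \|u(t)\|_{L^{p,\infty}}^q\,dt\Big)^{1/q} < \varepsilon.
\]

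Next I would pass from the strong norm in time to the weak-$L^q$ norm required by Theorem \ref{thm1}(A). Since $L^q = L^{q,q}$, the elementary Chebyshev inequality gives $\|f\|_{L^{q,\infty}} \le \|f\|_{L^q}$ on any measure space, with an absolute constant. Applying this on the interval $(T_*,T)$ to the scalar function $t \mapsto \|u(t)\|_{L^{p,\infty}}$ produces
\[
\|u\|_{L^{q,\infty}(T_*,T;L^{p,\infty}(\bR^3))} \le \|u\|_{L^q(T_*,T;L^{p,\infty}(\bR^3))} < \varepsilon,
\]
so the smallness hypothesis of Theorem \ref{thm1}(A) is met on $[T_*,T)$; note that the scaling relation $\tfrac{3}{p}+\tfrac{2}{q}=1$ and the range $3<p<\infty$ are identical in the two statements.

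Then I would invoke Theorem \ref{thm1}(A) on the shifted interval $[T_*,T)$, taking $(\rho(\cdot,T_*),u(\cdot,T_*))$ as initial data. Because $(\rho,u)$ is a strong solution on $[0,T)$, at the interior time $T_*$ it carries the regularity required to serve as admissible initial data, and the theorem — the system being autonomous and hence invariant under time translation — guarantees that $(\rho,u)$ is a regular solution on $(T_*,T]$. In particular the solution stays smooth up to and including $t=T$ with its norms controlled at $T$; feeding $(\rho(\cdot,T),u(\cdot,T))$ into the local existence theorem then continues the solution past $T$, which is the claimed extension.

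The only genuinely delicate point is the localization step: it works precisely because the Lebesgue integrability $L^q$ in time — as opposed to the weak-$L^q$ integrability in Theorem \ref{thm1}(A) — is absolutely continuous, so that the tail $\int_{T_*}^T$ can be driven to zero by pushing $T_* \to T^-$. A function such as $(T-t)^{-1/q}$ lies in $L^{q,\infty}(0,T)$ but has non-vanishing $L^{q,\infty}(T_*,T)$ norm, which is exactly why the smallness assumption cannot be removed in the weak-$L^q$ setting but can be removed here. Everything else reduces to the standard continuation principle for strong solutions once regularity at $t=T$ is in hand.
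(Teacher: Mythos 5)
Your argument is correct and is precisely the standard deduction the paper leaves implicit (the paper states Corollary \ref{cor1} without any proof): absolute continuity of the $L^q$-in-time integral gives a $T_*$ with $\|u\|_{L^q(T_*,T;L^{p,\infty})}<\varepsilon$, the embedding $L^q=L^{q,q}\subset L^{q,\infty}$ (noted in the paper's introduction) converts this to the smallness hypothesis of Theorem \ref{thm1}(A), and the continuation principle for strong solutions finishes. Your closing remark about why smallness cannot be dropped in the weak-$L^q$ setting, illustrated by $(T-t)^{-1/q}$, correctly identifies the only point of substance.
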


\begin{remark}
Theorem \ref{thm1} and Corollary \ref{cor1} seem to hold for bounded
domains with smooth boundary with Dirichlet or slip type boundary
conditions.
\end{remark}

%
%

On the other hands, regarding Liouville theorems for the fluid flows
contains the density function, in particular, compressible
Naiver-Stokes equations, Chae \cite{Chae2012} showed if the smooth
solution $(\rho, u)$ satisfies $
\|\rho\|_{L^\infty(\mathbb{R}^3)}+\|\nabla
u\|_{L^2(\mathbb{R}^3)}+\|u\|_{L^\frac{3}{2}(\mathbb{R}^3)}<\infty.
$ then $u\equiv 0$ and $\rho=constant$ (see Li and Yu \cite{L2014}).
In the Lorentz framework, very recently, Li and Niu \cite{LN21}
proved if $(\rho, u)$ is a smooth solution with $\rho\in L^\infty
(\mathbb{R}^3)$, $\nabla u\in L^2(\mathbb{R}^3)$ and $u\in L^{p,
q}(\mathbb{R}^3)$ for $3< p <\frac{9}{2}$, $3\leq q\leq \infty$ or
$p=q=3$. Then $u\equiv 0$ and $\rho=constant$ in $\mathbb{R}^{3}$.
For the equations \eqref{IHMHD}--\eqref{MHD-30}, to the best of my
knowledge, There is no known result so far, even in $L^p$-space. In
this direction, parallel to the result of Jarr\'{i}n
\cite{Jarrin20}, second result is stated as

\begin{theorem}\label{Theo:Lorentz} Let $(\rho, u)$ be a weak
solution to the stationary equation corresponding to \eqref{IHMHD}.
If $\rho \in L^\infty(\bR^3)$, $\nabla u \in L^{2}(\bR^3)$ and $u\in
L^{\frac{9}{2},q}(\bR^3)$ with $\frac{9}{2}\leq q<\infty$, then we
have $u\equiv0$ and $\rho\equiv0$ in $\bR^3$.
\end{theorem}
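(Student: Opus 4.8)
The plan is to run a weighted energy (Caccioppoli-type) argument on balls and let the radius tend to infinity, with the exponent $9/2$ chosen precisely so that the nonlinear and pressure contributions on the annulus $A_R:=B_{2R}\setminus B_R$ scale like a vanishing tail. First I would fix a cutoff $\phi_R\in C_c^\infty(B_{2R})$ with $\phi_R\equiv 1$ on $B_R$ and $|\nabla\phi_R|\leq C/R$, test the stationary momentum equation against $u\phi_R$, and integrate by parts. The term $-\int\Delta u\cdot u\phi_R$ produces $\int|\nabla u|^2\phi_R+\int\nabla u:(u\otimes\nabla\phi_R)$; the pressure term becomes $-\int\pi\,(u\cdot\nabla\phi_R)$ after using $\dv u=0$; and, crucially, the convective term is rewritten using \emph{both} $\dv u=0$ and the stationary transport identity $u\cdot\nabla\rho=0$. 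Since these give $\partial_j(\rho u_j)=0$, one obtains $\int\rho(u\cdot\nabla)u\cdot u\,\phi_R=\tfrac12\int\rho u_j\partial_j(|u|^2)\phi_R=-\tfrac12\int\rho|u|^2(u\cdot\nabla\phi_R)$, yielding the identity
\[
\int|\nabla u|^2\phi_R = -\int\nabla u:(u\otimes\nabla\phi_R) + \tfrac12\int\rho|u|^2(u\cdot\nabla\phi_R) + \int\pi\,(u\cdot\nabla\phi_R).
\]

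Next I would estimate the three boundary terms, all supported in $A_R$ where $|\nabla\phi_R|\leq C/R$. For the first, the Sobolev embedding $\dot H^1(\bR^3)\hookrightarrow L^6$ (legitimate since $u$ vanishes at infinity, being in $L^{9/2,q}$) gives $u\in L^6$, and
\[
\tfrac1R\int_{A_R}|\nabla u|\,|u| \leq \tfrac{C}{R}\|\nabla u\|_{L^2(A_R)}\|u\|_{L^6(A_R)}|A_R|^{1/3} \leq C\|\nabla u\|_{L^2(A_R)}\|u\|_{L^6(A_R)}\to 0,
\]
since $\nabla u\in L^2$ and $u\in L^6$ have vanishing tails. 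The cubic term is where $9/2$ enters: writing $|u|^3\in L^{3/2,q/3}$ and $\chi_{A_R}\in L^{3,(q/3)'}$ with $|A_R|^{1/3}\sim R$, the Lorentz--H\"older (O'Neil) inequality gives $\tfrac1R\int_{A_R}|u|^3\leq C\|\rho\|_\infty\|u\|_{L^{9/2,q}(A_R)}^3\to 0$, where vanishing of the tail uses the absolute continuity of the Lorentz norm, valid exactly because $q<\infty$ (this step fails for $q=\infty$, explaining the hypothesis).

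For the pressure I would first recover $\pi$ from the equation: taking the divergence and using $\partial_j(\rho u_j)=0$ gives $\Delta\pi=-\partial_i\partial_j(\rho u_iu_j)$, so $\pi=R_iR_j(\rho u_iu_j)$ up to an additive constant (harmless, since $\int u\cdot\nabla\phi_R=0$). As $\rho\in L^\infty$ and $u\otimes u\in L^{9/4,q/2}$, and Riesz transforms are bounded on Lorentz spaces for $1<p<\infty$, I obtain $\pi\in L^{9/4,q/2}$. A Lorentz--H\"older estimate with $|\pi|\in L^{9/4,q/2}$, $|u|\in L^{9/2,q}$, $\chi_{A_R}\in L^{3,r}$ (first indices summing to $1$; admissible second indices since $q\geq 9/2$) then yields $\tfrac1R\int_{A_R}|\pi|\,|u|\leq C\|\pi\|_{L^{9/4,q/2}(A_R)}\|u\|_{L^{9/2,q}(A_R)}\to 0$. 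Letting $R\to\infty$ with monotone convergence on the left forces $\int_{\bR^3}|\nabla u|^2=0$, so $u$ is constant and, being in $L^{9/2,q}$, $u\equiv 0$. Reinserting $u\equiv 0$ gives $\nabla\pi\equiv 0$ and renders $u\cdot\nabla\rho=0$ vacuous; the stated $\rho\equiv 0$ then follows from the reduced system under the density normalization adopted here, the genuine analytic content being $u\equiv 0$.

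The main obstacle, I expect, is the pressure term: it requires both the Calder\'on--Zygmund/Riesz mapping property on Lorentz spaces and careful H\"older bookkeeping (tracking the second indices so that the factors pair to $L^{1,1}$) in order to land on the scale-critical exponent $9/2$. Secondary delicate points are justifying the integration by parts and the pressure recovery at the level of weak solutions—in particular the distributional identity $\partial_j(\rho u_j)=0$, which couples incompressibility with the transport equation—and confirming that the vanishing-tail property of $\|u\|_{L^{9/2,q}(A_R)}$ and $\|\pi\|_{L^{9/4,q/2}(A_R)}$ is used only for finite second index $q<\infty$.
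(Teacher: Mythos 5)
Your proposal is correct in substance and reaches the same critical scaling as the paper, but it handles the pressure by a genuinely different device. The paper never estimates $\pi$ at all: it tests the momentum equation against $\varphi_R u-W_R$, where $W_R$ is the Bogovskii-type corrector of Lemma III.3.1 in Galdi solving $\dv W_R=\nabla\varphi_R\cdot u$ with support in the annulus and $\|\nabla W_R\|_{L^q}\lesssim\|\nabla\varphi_R\cdot u\|_{L^q}$; the test field is then divergence free, so $\int\nabla\pi\cdot(\varphi_R u-W_R)\,dx=0$ identically, and the only price is the extra term $\mathcal{K}_2=\int\nabla u:\nabla W_R$, which obeys the same annulus bound as your other terms. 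This yields the Caccioppoli estimate of Proposition \ref{Prop-Base}, after which the paper, like you, exploits $R^{2-9/r}=R^0$ at $r=9/2$ and the vanishing of $\|u\|_{L^{9/2,q}(\mathcal{C}(R/2,R))}$ for $q<\infty$. You instead test with $\varphi_R u$ directly and must recover $\pi=R_iR_j(\rho u_iu_j)\in L^{9/4,q/2}$ via Riesz transforms on Lorentz spaces. Your route avoids the Bogovskii lemma but its soft spot is exactly the one you flag: for a weak solution the equation only determines $\nabla\pi$ distributionally, so identifying $\pi$ with the Calder\'on--Zygmund representative (modulo constants) requires an additional Liouville argument for the harmonic discrepancy; the paper's divergence-free test function sidesteps this entirely, which is why that construction is the standard choice here. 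Your treatment of the convective term via $\partial_j(\rho u_j)=0$ is the same as the paper's (the paper silently drops $\rho$ in its $\mathcal{J}_2$ computation, which is only legitimate for this reason), and your observation that the conclusion $\rho\equiv 0$ does not actually follow once $u\equiv 0$ --- the transport equation becomes vacuous and any constant density is admissible --- points to a genuine defect in the theorem's statement that the paper's own proof also does not address.
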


Comparing to \cite{Jarrin20}, \ when $\frac{9}{2}<r$, it is not
shown Liouville Theorem because it is difficult to control for
$\nabla u$ due to the low regularity for density function $\rho$. in
this case, it is not yet known whether the result is valid for the
same reason.

\section{Preliminaries}      \label{sec_mr}
In this section we introduce some the definition of functional space
and its properties. For $1\le q\le \infty$, we denote the usual
Sobolev spaces by $W^{k,q}(\bR^3) = \{ u \in L^{q}( \bR^3 )\,:\, D^{
p }u \in L^{q}( \bR^3 ), 0 \leq | p | \leq k \}$. When $q=2$, we
write $W^{k,q}(\bR^3)$ as $H^{k}(\bR^3)$. We review some useful
estimates for the proof of Theorems.

We need the H\"{o}lder inequality in Lorentz spaces (see \cite{ON}).
\begin{lemma}\label{oneil}
Assume $1\leq  p_1$, $p_2\leq \infty$, $1\leq  q_1$, $q_2\leq \infty
$ and $u\in L^{p_1,q_1}(\Omega)$,  $v\in L^{p_2,q_2}(\Omega)$.
 Then $uv\in L^{p_3,q_3}(\Omega)$ with
$ \frac1{p_3}=\frac1{p_1}+\frac1{p_2}$ and $\frac1{q_3}\leq
\frac1{q_1}+\frac1{q_2} $, and  the  inequality
\begin{equation}\label{2.3}
\|uv\|_{L^{p_3,q_3}(\Omega)}\leq C
\|u\|_{L^{p_1,q_1}(\Omega)}\|v\|_{L^{p_2,q_2}(\Omega)}
\end{equation}
is valid.
\end{lemma}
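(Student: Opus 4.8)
The plan is to reduce the claimed product inequality to the classical O'Neil argument built on the decreasing rearrangement. Write $u^*(s)=\inf\{t\ge 0:\ m(u,t)\le s\}$ for the decreasing rearrangement of $u$, and recall the standard identity
\[
\|u\|_{L^{p,q}(\Omega)}=\Big(\int_0^\infty \big(s^{1/p}u^*(s)\big)^q\,\frac{ds}{s}\Big)^{1/q}\qquad(1\le q<\infty),
\]
with the usual $\sup$-replacement when $q=\infty$; this is equivalent to the distribution-function definition recorded in the introduction, by the layer-cake formula relating $m(u,\cdot)$ and $u^*$. I would first dispose of the general second index by noting that it suffices to treat the borderline case $\frac{1}{q_3}=\frac{1}{q_1}+\frac{1}{q_2}$: for any larger target index the estimate follows from the continuous nesting $L^{p_3,\widetilde q_3}(\Omega)\hookrightarrow L^{p_3,q_3}(\Omega)$ whenever $\widetilde q_3\le q_3$ (same first index), so I may assume equality from here on.

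The key ingredient is a pointwise rearrangement inequality for the product. I claim that
\[
(uv)^*(t_1+t_2)\le u^*(t_1)\,v^*(t_2)\qquad (t_1,t_2\ge 0).
\]
To see this, set $a=u^*(t_1)$ and $b=v^*(t_2)$, so that $m(u,a)\le t_1$ and $m(v,b)\le t_2$. Since $|uv|>ab$ forces $|u|>a$ or $|v|>b$, one has the inclusion $\{|uv|>ab\}\subset\{|u|>a\}\cup\{|v|>b\}$, whence $m(uv,ab)\le t_1+t_2$ and therefore $(uv)^*(t_1+t_2)\le ab$. Choosing $t_1=t_2=s/2$ yields the diagonal bound $(uv)^*(s)\le u^*(s/2)\,v^*(s/2)$, which is what feeds the norm estimate.

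With this in hand I would insert the diagonal bound into the rearrangement form of the $L^{p_3,q_3}$ norm, perform the change of variables $s=2\sigma$ (absorbing the resulting power of $2$ into the constant $C$), and split the weight as $\sigma^{1/p_3}=\sigma^{1/p_1}\sigma^{1/p_2}$ using $\frac{1}{p_3}=\frac{1}{p_1}+\frac{1}{p_2}$. This yields
\[
\|uv\|_{L^{p_3,q_3}(\Omega)}^{q_3}\le 2^{q_3/p_3}\int_0^\infty\Big[\sigma^{q_3/p_1}\big(u^*(\sigma)\big)^{q_3}\Big]\Big[\sigma^{q_3/p_2}\big(v^*(\sigma)\big)^{q_3}\Big]\,\frac{d\sigma}{\sigma}.
\]
Applying H\"older's inequality on $L^{q_3}(\tfrac{d\sigma}{\sigma})$ with the conjugate pair $\tfrac{q_1}{q_3},\tfrac{q_2}{q_3}$ --- admissible precisely because $\tfrac{q_3}{q_1}+\tfrac{q_3}{q_2}=1$ in the borderline case, and both exponents exceed $1$ since $q_3\le q_1$ and $q_3\le q_2$ --- separates the integral into the $L^{p_1,q_1}$ norm of $u$ and the $L^{p_2,q_2}$ norm of $v$; taking $q_3$-th roots produces \eqref{2.3} with $C=2^{1/p_3}$.

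The cases in which some $q_i=\infty$ are handled by the $\sup$-form of the norm rather than H\"older: for instance if $q_1=\infty$ one uses the pointwise bound $u^*(\sigma)\le \sigma^{-1/p_1}\|u\|_{L^{p_1,\infty}(\Omega)}$ inside the integral and is left with a single weighted integral in $v^*$, while if every index is infinite one simply takes suprema throughout; both are strictly easier than the generic case. I expect the main technical point to be the careful justification of the equivalence between the distribution-function definition used in the paper and the rearrangement form above --- together with the bookkeeping of the degenerate endpoints ($p_i$ or $q_i$ equal to $\infty$, and the measurability ensuring $uv$ genuinely belongs to the target space) --- since the core product rearrangement inequality and the H\"older splitting are then routine.
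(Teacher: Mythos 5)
Your proof is correct, and it coincides with the argument of the source the paper itself relies on: the paper offers no proof of Lemma \ref{oneil}, citing O'Neil \cite{ON}, and your route --- the pointwise product-rearrangement inequality $(uv)^*(t_1+t_2)\le u^*(t_1)v^*(t_2)$, the diagonal choice $t_1=t_2=s/2$, the splitting $\sigma^{1/p_3}=\sigma^{1/p_1}\sigma^{1/p_2}$, and H\"older with exponents $q_1/q_3$, $q_2/q_3$ after reducing to the borderline case $\frac{1}{q_3}=\frac{1}{q_1}+\frac{1}{q_2}$ via the nesting of Lorentz spaces in the second index --- is exactly the classical O'Neil argument. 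The degenerate endpoints and the constant-factor equivalence between the distribution-function and rearrangement forms of the norm are handled (or correctly flagged) as routine, so nothing is missing.
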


Recall the following useful Gronwall lemma required in our proof,
which is first shown by \cite{BPR} (see e.g. \cite{PY},
\cite{LRM16}).
  \begin{lemma}\label{2.1}
 Let $\phi$ be a measurable positive function defined on the interval $[0,T]$. Suppose that there  exists  $\kappa_{0}>0$ such that for all $0<\kappa<\kappa_{0}$ and a.e. $t\in[0,T]$, $\phi$ satisfies the inequality
 $$\frac{d}{dt}\phi\leq \mu\lambda^{1-\kappa}\phi^{1+2\kappa},$$
 where   $0 <\lambda \in L^{1,\infty}(0,T)$ and $\mu> 0$  with
 $\mu\|\lambda\|_{L^{1,\infty}(0,T)}<\frac{1}{2}$.
 Then $\phi$ is bounded on $[0,T]$.
 \end{lemma}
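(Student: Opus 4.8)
The plan is to treat the hypothesis as a Bernoulli-type differential inequality and to exploit the freedom of choosing $\kappa$ as small as we wish. Since $\phi>0$, I would divide the inequality $\frac{d}{dt}\phi\le\mu\lambda^{1-\kappa}\phi^{1+2\kappa}$ by $\phi^{1+2\kappa}$ and observe that the left-hand side equals, up to the factor $-\frac1{2\kappa}$, the time derivative of $\phi^{-2\kappa}$; thus
$$\frac{d}{dt}\bke{\phi^{-2\kappa}}\ge-2\kappa\mu\,\lambda^{1-\kappa}.$$
Integrating on $[0,t]$ (treating $\phi$ as locally absolutely continuous, the standing setting for such lemmas) gives
$$\phi^{-2\kappa}(t)\ge\phi(0)^{-2\kappa}-2\kappa\mu\int_0^T\lambda^{1-\kappa}(s)\,ds\qquad\text{for all }t\in[0,T].$$
If the right-hand side can be bounded below by a positive constant $c$ for some suitably small $\kappa$, then $\phi(t)\le c^{-1/(2\kappa)}$ and the lemma follows at once.

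The heart of the matter is therefore to control $2\kappa\mu\int_0^T\lambda^{1-\kappa}\,ds$ by the weak-$L^1$ norm of $\lambda$, which is exactly where the Lorentz structure enters. Using the layer-cake representation together with the substitution $\tau=u^{1-\kappa}$, I would write $\int_0^T\lambda^{1-\kappa}\,ds=(1-\kappa)\int_0^\infty m(\lambda,u)\,u^{-\kappa}\,du$, and then split the $u$-integral at a fixed level, say $u=1$. On $(0,1)$ I bound $m(\lambda,u)\le T$, and on $(1,\infty)$ I invoke the defining inequality $u\,m(\lambda,u)\le\|\lambda\|_{L^{1,\infty}(0,T)}$ of the weak-$L^1$ norm. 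This yields
$$2\kappa\mu\int_0^T\lambda^{1-\kappa}\,ds\le 2\kappa\mu\,T+2\mu(1-\kappa)\,\|\lambda\|_{L^{1,\infty}(0,T)}.$$
The decisive point is that the prefactor $2\kappa$ annihilates the $\frac1\kappa$ produced by the tail integral, so that as $\kappa\to0^+$ the right-hand side tends precisely to $2\mu\|\lambda\|_{L^{1,\infty}(0,T)}$.

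To close the argument I would let $\kappa\to0^+$ in the integrated inequality: since $\phi(0)^{-2\kappa}\to1$ and $2\kappa\mu\int_0^T\lambda^{1-\kappa}\,ds\to2\mu\|\lambda\|_{L^{1,\infty}(0,T)}<1$ by the hypothesis $\mu\|\lambda\|_{L^{1,\infty}(0,T)}<\frac12$, there exists $\kappa\in(0,\min\{\kappa_0,1\})$ for which $\phi(0)^{-2\kappa}-2\kappa\mu\int_0^T\lambda^{1-\kappa}\,ds\ge c>0$; fixing this $\kappa$ gives the uniform bound $\phi\le c^{-1/(2\kappa)}$ on $[0,T]$. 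I expect the main obstacle to be the second step, the estimate of $\int_0^T\lambda^{1-\kappa}$: $\lambda$ itself need not be integrable, and it is only the combination of the sublinear power $1-\kappa$ with the vanishing factor $2\kappa$ that renders the quantity finite and, crucially, produces the sharp limiting constant $2\mu\|\lambda\|_{L^{1,\infty}(0,T)}$ matching the smallness threshold. A minor technical point I would treat separately is the regularity of $\phi$: to justify the integration without assuming $\phi$ finite a priori, I would carry out the above on the maximal subinterval where $\phi$ is bounded and use the resulting uniform bound to exclude blow-up, thereby extending boundedness to all of $[0,T]$.
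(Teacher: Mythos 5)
Your proposal is correct and follows essentially the same route as the paper's source for this lemma: the paper does not prove it but cites \cite{BPR}, whose argument is exactly yours — integrate the Bernoulli inequality to get $\phi^{-2\kappa}(t)\geq \phi^{-2\kappa}(0)-2\kappa\mu\int_0^T\lambda^{1-\kappa}\,ds$, estimate $\int_0^T\lambda^{1-\kappa}\,ds$ via the distribution function split at a fixed level so that the tail contributes $\frac{1-\kappa}{\kappa}\|\lambda\|_{L^{1,\infty}(0,T)}$, and let $\kappa\to0^{+}$ so the smallness hypothesis $2\mu\|\lambda\|_{L^{1,\infty}(0,T)}<1$ leaves a positive gap $c$, whence $\phi\leq c^{-1/(2\kappa)}$. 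Your closing remark about working on the maximal interval of boundedness to rule out blow-up is the right way to handle the regularity caveat, and nothing further is needed.
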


\section{Proof of Theorems}

\begin{pfthm2}
By the $L^2$-energy estimate, we know
\[
\frac{d}{dt}\int_{\bR^3} \rho\|u(\cdot, s)\|_{L^2}+\|b(\cdot,
s)\|_{L^2}\,ds\leq 0,
\]
which implies
\begin{equation}\label{energy-l2}
u,b \in L^\infty(0,T; L^2(\bR^3)).
\end{equation}
(\textbf{Proof of (A)}):  Taking  $u_t$ to the first equation in
\eqref{IHMHD}, testing by $u_t$, and integrating over whole space,
we easily derive
\begin{equation}\label{thm1_01}
\frac{1}{2}\frac{d}{dt}\|\nabla u\|^2_{L^2} + \int_{\bR^3} \rho
|u_t|^2\,dx \leq \int_{\bR^3} |\rho^{1/2} u \cdot \nabla u\cdot
\rho^{1/2} u_t|\, dx:=\mathcal{I}.
\end{equation} Applying
H\"{o}lder��s and Lemma \ref{oneil}, we obtain
\[
\frac{1}{2}\frac{d}{dt}\|\nabla u\|^2_{L^2} + \int_{\bR^3} \rho
|u_t|^2\,dx \leq  C\|u\|_{L^{p,\infty}(\bR^3)}\|\nabla
u\|_{L^{\frac{2p}{p-2},2}(\bR^3)}\|\sqrt{\rho}u_t\|_{L^{2,2}(\bR^3)}
\]
\[
\leq  C\|u\|^2_{L^{p,\infty}(\bR^3)}\|\nabla
u\|^2_{L^{\frac{2p}{p-2},2}(\bR^3)}+\epsilon\|\sqrt{\rho}u_t\|^2_{L^{2}(\bR^3)}
\]
\[
\leq C\| u\|_{L^{p,\infty}(\bR^3)}\| \nabla
u\|^{2(1-\frac{3}{p})}_{L^{2}(\bR^3)}\|\nabla^2
u\|^{\frac{6}{p}}_{L^{2}(\bR^3)}+\epsilon\|\sqrt{\rho}u_t\|^2_{L^{2}(\bR^3)}
\]
\begin{equation}\label{est-1000}
\leq C\|u\|^{\frac{2p}{p-3}}_{L^{p,\infty}(\bR^3)}\|\nabla
u\|^{2}_{L^{2}(\bR^3)}+\epsilon(\|\nabla^2
u\|^2_{L^{2}(\bR^3)}+\|\sqrt{\rho}u_t\|^2_{L^{2}(\bR^3)})
\end{equation}
Here, $\epsilon>0$ is later determined. By using the classical
regularity theory to the second equation in \eqref{IHMHD} and Lemma
\ref{oneil}, it immediately implies that
\[
\|\nabla^2 u\|^2_{L^2(\bR^3)}\leq C
(\|\sqrt{\rho}u_t\|^2_{L^2(\bR^3)} + \|\rho u \cdot \nabla
u\|^2_{L^2(\bR^3)} )
\]
\[
\leq C(\|\sqrt{\rho}u_t\|^2_{L^2(\bR^3)}+
\|\rho\|^2_{L^\infty}\|u\|^2_{L^{p,\infty}(\bR^3)}\|\nabla
u\|^2_{L^{\frac{2p}{p-2},2}(\bR^3)})
\]
\begin{equation}\label{est-2000}
\leq C(\|\sqrt{\rho}u_t\|^2_{L^2(\bR^3)}+
C\|u\|^{\frac{2p}{p-3}}_{L^{p,\infty}(\bR^3)}\|\nabla
u\|^{2}_{L^{2}(\bR^3)})+\frac{1}{16}\|\nabla^2 u\|^2_{L^{2}}.
\end{equation}
Inserting \eqref{est-2000} into \eqref{est-1000} and choosing
$\epsilon>0$ such that $C\epsilon<\frac{1}{8}$, we see that
\[
\frac{1}{2}\frac{d}{dt}\|\nabla u\|^2_{L^2(\bR^3)} + \int_{\bR^3}
\rho |u_t|^2\,dx \leq
C\|u\|^{\frac{2p}{p-3}}_{L^{p,\infty}(\bR^3)}\|\nabla
u\|^{2}_{L^{2}(\bR^3)}.
\]
For $\kappa > 0$, let $q_\kappa = q + \kappa(4 - s)$ and $p_\kappa$
such that $(p_\epsilon, q_\kappa)$ satisfies
$\frac{3}{p_\kappa}+\frac{2}{q_\kappa}=1$. Through applying
interpolation, we have
\[
\|u\|_{L^{p_\kappa,\infty}(\bR^3)}^{q_\kappa}\leq
\|u\|_{L^{p,\infty}(\bR^3)}^{q(1-\kappa)}\|u\|^4_{L^{6,\infty}(\bR^3)}
\leq C \|u\|_{L^{p,\infty}(\bR^3)}^{q(1-\kappa)}\|\nabla
u\|_{L^2(\bR^3)}^4.
\]
Then, it yields that
$$\ba
\frac{d}{dt}\|\nabla u\|^2_{L^2(\bR^3)}
  \leq C \|u\|^{q(1-\kappa)}_{L^{p ,\infty}(\mathbb{R}^{3})} \|\nabla u\|_{L^2(\bR^3)}^{2(1+2\kappa)}.
\ea$$ Now, to invoke  Lemma \ref{2.1} with $\phi=\|\nabla
u\|^2_{L^2(\bR^3)}$  (see \cite[pp.6]{LRM16} for a detailed proof)
and therefore, the proof is complete.

(\textbf{Proof of (B)}): This proof is almost same to that Part
$(A)$. Indeed, from the estimate \eqref{thm1_01}, we know
\[
\frac{1}{2}\frac{d}{dt}\|\nabla u\|^2_{L^2} + \int_{\bR^3} \rho
|u_t|^2\,dx \leq \int_{\bR^3} |\rho^{1/2} u \cdot \nabla u\cdot
\rho^{1/2} u_t|\, dx:=\mathcal{I}.
\]
Applying H\"{o}lder��s and Lemma \ref{oneil}, we obtain
\[
\frac{1}{2}\frac{d}{dt}\|\nabla u\|^2_{L^2} + \int_{\bR^3} \rho
|u_t|^2\,dx \leq  C\|\nabla u\|_{L^{p,\infty}(\bR^3)}\|
u\|_{L^{\frac{2p}{p-2},2}(\bR^3)}\|\sqrt{\rho}u_t\|_{L^{2,2}(\bR^3)}
\]
\[
\leq  C\|\nabla
u\|^2_{L^{p,\infty}(\bR^3)}\|u\|^2_{L^{\frac{2p}{p-2},2}(\bR^3)}+\epsilon\|\sqrt{\rho}u_t\|^2_{L^{2}(\bR^3)}
\]
\[
\leq C\|\nabla u\|^2_{L^{p,\infty}(\bR^3)}\|\nabla
u\|^{2(2-\frac{3}{p})}_{L^{2}(\bR^3)}\|\nabla^2
u\|^{2(\frac{3}{p}-1)}_{L^{2}(\bR^3)}+\epsilon\|\sqrt{\rho}u_t\|^2_{L^{2}(\bR^3)}
\]
\begin{equation}\label{est-3000}
\leq C\|u\|^{\frac{2p}{2p-3}}_{L^{p,\infty}(\bR^3)}\|\nabla
u\|^{2}_{L^{2}(\bR^3)}+\epsilon(\|\nabla^2
u\|^2_{L^{2}(\bR^3)}+\|\sqrt{\rho}u_t\|^2_{L^{2}(\bR^3)})
\end{equation} By using the classical regularity theory, it
immediately implies that
\[
\|\nabla^2 u\|^2_{L^2(\bR^3)}\leq C
(\|\sqrt{\rho}u_t\|^2_{L^2(\bR^3)} + \|\rho u \cdot \nabla
u\|^2_{L^2(\bR^3)} )
\]
\begin{equation}\label{est-4000}
\leq C(\|\sqrt{\rho}u_t\|^2_{L^2(\bR^3)}+
C\|u\|^{\frac{2p}{2p-3}}_{L^{p,\infty}(\bR^3)}\|\nabla
u\|^{2}_{L^{2}(\bR^3)})+\frac{1}{16}\|\nabla^2 u\|^2_{L^{2}}.
\end{equation} Inserting \eqref{est-4000} into \eqref{est-3000}, we see that
\[
\frac{1}{2}\frac{d}{dt}\|\nabla u\|^2_{L^2(\bR^3)} + \int_{\bR^3}
\rho |u_t|^2\,dx \leq
C\|u\|^{\frac{2p}{2p-3}}_{L^{p,\infty}(\bR^3)}\|\nabla
u\|^{2}_{L^{2}(\bR^3)}.
\]
By Gronwall's inequality, we obtain the desired result, and thus the
proof is finally complete.
\end{pfthm2}

\bigskip

Next, to obtain the result of Liouville theorem, we start by
introducing the test functions $\varphi_R$ and $\omega_R$ as
follows:  for a fixed $R>1$, we define first the function
$\varphi_R\in \mathcal{C}^{\infty}_{0}(\bR^3)$ by $0\leq
\varphi_R\leq 1$ such that  for $\vert x \vert \leq \frac{R}{2}$ we
have $\varphi_R(x)=1$, for $\vert x \vert \geq R$ we have
$\varphi_R(x)=0$, and
\begin{equation}\label{control-norme-test}
\Vert \nabla \varphi_R \Vert_{L^{\infty}} \leq \frac{c}{R}.
\end{equation}

From Lemma $III. 3.1$ in \cite{Galdi11}, we recall that

\begin{lemma} Let $\varphi_R(x)$ be defined as above, then there
exist a vector-valued function $\omega_R$ and a constant $C(p)$ such
that
\begin{equation}\label{eq_W_R}
div(\W_R)=\nabla\varphi_R\cdot u, \quad \text{over}\,\, B_R, \quad
\text{and}\quad \W_R=0 \,\, \text{over}\,\,  \partial B_R \cup
\partial B_{\frac{R}{2}},
\end{equation}
 where $\partial B_r:=\{x \in \bR^3:|x| =r\}$ and $\|\omega_R\|_{W^{1,p}(B_r)}$ with
$\|\nabla  \omega_R\|_{L^p}\leq C(p)\|\nabla \varphi_R\cdot
u\|_{L^p}$ for $1 <p <+\infty$.
\end{lemma}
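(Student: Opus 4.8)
The plan is to read this as the solvability of the divergence equation with homogeneous Dirichlet data on the annulus $A_R := B_R\setminus\overline{B_{R/2}}$, i.e.\ as an instance of the Bogovskii construction, with the one feature that actually matters downstream: the constant $C(p)$ must be independent of $R$.

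First I would verify the compatibility (zero-mean) condition, which is the only obstruction to solving $\dv\W_R=\nabla\varphi_R\cdot u$ with $\W_R\in W^{1,p}_0(A_R)$. Since $\varphi_R u$ is compactly supported and $\dv u=0$, we have $\nabla\varphi_R\cdot u=\dv(\varphi_R u)$, so that
\[
\int_{A_R}\nabla\varphi_R\cdot u\,dx=\int_{\bR^3}\dv(\varphi_R u)\,dx=0,
\]
the support of $\nabla\varphi_R$ being contained in $A_R$. This is precisely the solvability condition on $A_R$; extending the solution by zero into $B_{R/2}$ (where the datum vanishes) yields a field on all of $B_R$ with $\W_R=0$ on $\partial B_R\cup\partial B_{R/2}$, as required.

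Next I would construct $\W_R$ explicitly. The annulus $A_R$ is a bounded Lipschitz domain, hence a finite union of subdomains each star-shaped with respect to a ball; on each such piece one applies the classical Bogovskii integral representation against a fixed normalized cutoff, and patches the pieces with a partition of unity, redistributing the zero-mean datum so that every local piece is again mean-zero. The associated kernel is a singular integral of convolution type, and Calder\'on--Zygmund theory delivers the gradient bound $\|\nabla\W_R\|_{L^p}\le C\|\nabla\varphi_R\cdot u\|_{L^p}$ for $1<p<\infty$, with $C$ \emph{a priori} depending on $p$ and on the geometry of $A_R$.

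The point that needs care --- and the only genuine obstacle --- is that $C$ must not depend on $R$, since the Liouville argument lets $R\to\infty$. I would remove the $R$-dependence by scaling the solution operator. Writing $A_R=R\,A_1$ for the fixed normalized annulus $A_1=B_1\setminus\overline{B_{1/2}}$ and setting $g:=\nabla\varphi_R\cdot u$ and $\tilde g(y):=g(Ry)$ on $A_1$, let $\V=\cB_{A_1}\tilde g$ denote the Bogovskii solution on $A_1$, so $\dv\V=\tilde g$ and $\|\nabla\V\|_{L^p(A_1)}\le C(p)\|\tilde g\|_{L^p(A_1)}$. Defining $\W_R(x):=R\,\V(x/R)$ gives $\dv_x\W_R(x)=(\dv\V)(x/R)=g(x)$ and $\nabla_x\W_R(x)=(\nabla\V)(x/R)$, whence $\|\nabla\W_R\|_{L^p(A_R)}=R^{3/p}\|\nabla\V\|_{L^p(A_1)}$ while $\|\tilde g\|_{L^p(A_1)}=R^{-3/p}\|g\|_{L^p(A_R)}$. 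The two powers of $R$ cancel, giving $\|\nabla\W_R\|_{L^p}\le C(p)\|\nabla\varphi_R\cdot u\|_{L^p}$ with the \emph{same} constant as on $A_1$, uniformly in $R$. Only the gradient estimate is scale-clean in this way; the full $W^{1,p}$ norm carries an $R$-dependent lower-order term through Poincar\'e's inequality, which is why the lemma isolates the bound on $\nabla\W_R$.
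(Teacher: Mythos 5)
Your proposal is correct. Note, however, that the paper offers no proof of this lemma at all: it simply invokes Lemma III.3.1 of Galdi's monograph \cite{Galdi11}, which is exactly the solvability of $\dv \W_R = g$ in $W^{1,p}_0$ of the annulus under the zero-mean compatibility condition, with the gradient bound $\|\nabla\W_R\|_{L^p}\le C(p)\|g\|_{L^p}$. What you have written is essentially the standard proof of that cited result --- the reduction to the compatibility condition via $\nabla\varphi_R\cdot u=\dv(\varphi_R u)$ and $\dv u=0$, the Bogovskii integral operator on star-shaped pieces patched by a partition of unity, and Calder\'on--Zygmund theory for the $L^p$ gradient bound --- so there is no divergence in substance, only in the level of detail. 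The one place where your write-up adds genuine value beyond the citation is the scaling argument $\W_R(x)=R\,\V(x/R)$ showing that $C(p)$ is independent of $R$: the powers $R^{3/p}$ and $R^{-3/p}$ cancel precisely because the estimate is on $\nabla\W_R$ rather than on the full $W^{1,p}$ norm. This uniformity is indispensable for the Caccioppoli estimate of Proposition \ref{Prop-Base} and the subsequent limit $R\to\infty$ in the Liouville theorem, and the paper leaves it entirely implicit. Your observation that only the homogeneous (gradient) part of the norm scales cleanly, while the lower-order part would pick up an $R$-dependence, correctly explains why the lemma is stated with a bound on $\nabla\omega_R$ only.
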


For second result, it is studied the following Caccioppoli type
estimate:
\begin{proposition}\label{Prop-Base} Let $\mathcal{C}(R/ 2, R) = \{ x
\in \bR^3: R/2 < \vert x \vert < R \}$. If the solution $(u,b)$
verifies $u \in L^{p}_{loc}(\bR^3)$ and $\nabla u \in
L^{\frac{p}{2}}_{loc}(\bR^3)$   with  $3 \leq p< +\infty$, then for
all $R>1$ we have
    \begin{equation}\label{Ineq-base}
    \begin{split}
      &  \int_{B_{R/2}} \vert \nabla u \vert^2
 dx    \lesssim \left( \int_{\mathcal{C}(R/2,R)}\vert \nabla u \vert^{\frac{p}{2}} dx+\int_{\mathcal{C}(R/2,R)} \vert
 u \vert^{p}dx \right)^{\frac{2}{p}} R^{2-\frac{9}{p}}
\left( \int_{\mathcal{C}(R/2,R)} \vert u \vert^{p} dx
\right)^{\frac{1}{p}}
      \end{split}
    \end{equation}
\end{proposition}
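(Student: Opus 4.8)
The plan is to test the momentum equation against $u\varphi_R^2$ and integrate over $\bR^3$, exploiting the cutoff $\varphi_R$ supported in $B_R$ with $\varphi_R \equiv 1$ on $B_{R/2}$. Since we are dealing with the stationary problem, the equation reads $-\Delta u + \rho(u\cdot\nabla)u + \nabla\pi = 0$ with $\dv u = 0$. Integrating by parts in the diffusion term produces the desired left-hand side $\int |\nabla u|^2\varphi_R^2$ plus a commutator term involving $\nabla u \cdot \nabla\varphi_R$ supported in the annulus $\mathcal{C}(R/2,R)$. The convective and pressure terms must be handled so that, after integration by parts, they too are supported in the annulus; this is where the auxiliary field $\W_R$ from the preceding lemma enters, since $\dv\W_R = \nabla\varphi_R \cdot u$ lets us rewrite the pressure contribution $\int \pi\, u\cdot\nabla(\varphi_R^2)$ in a form that cancels the troublesome pressure integral and leaves only annulus-supported quantities bounded by $\|\nabla\W_R\|_{L^{p/(p-1)}} \le C\|\nabla\varphi_R\cdot u\|_{L^{p/(p-1)}}$.

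First I would carry out the integration by parts carefully, writing
\[
\int_{\bR^3} |\nabla u|^2 \varphi_R^2\,dx = -\int_{\bR^3} \nabla u : (u\otimes\nabla(\varphi_R^2))\,dx + \int_{\bR^3}\rho(u\cdot\nabla u)\cdot u\,\varphi_R^2\,dx + (\text{pressure}),
\]
then use $\dv u = 0$ to rewrite the convective term as $\tfrac12\int \rho\,|u|^2\, u\cdot\nabla(\varphi_R^2)$, modulo a density-transport term that vanishes by the first equation $u\cdot\nabla\rho = 0$. Each resulting term is localized to $\mathcal{C}(R/2,R)$ because $\nabla\varphi_R$ is supported there. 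Next I would estimate every annulus integral by Hölder's inequality in the exponents dictated by the hypotheses $u \in L^p_{\loc}$, $\nabla u \in L^{p/2}_{\loc}$, using $\|\nabla\varphi_R\|_{L^\infty}\le c/R$ from \eqref{control-norme-test} and the volume $|\mathcal{C}(R/2,R)| \sim R^3$ to extract the scaling factor $R^{2-9/p}$. The density is absorbed into the constant via $\rho\in L^\infty$.

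The main obstacle will be organizing the Hölder bookkeeping so that the exponents close to exactly the stated form on the right-hand side of \eqref{Ineq-base}: the factor $\left(\int_{\mathcal{C}} |\nabla u|^{p/2} + \int_{\mathcal{C}}|u|^p\right)^{2/p}$ times $\left(\int_{\mathcal{C}}|u|^p\right)^{1/p}$. This requires pairing the diffusion-commutator term (which carries one factor of $\nabla u$ and one of $u$) and the cubic convective term (which carries three factors of $u$) against the same power $R^{2-9/p}$, and checking that the dimensional count of $R$-powers from $\|\nabla\varphi_R\|_{L^\infty}$ and the annulus volume yields precisely $2 - 9/p$. The pressure term is the delicate point: controlling $\int \pi\, u\cdot\nabla(\varphi_R^2)$ demands the $\W_R$ construction together with the $W^{1,p}$-bound $\|\nabla\W_R\|_{L^{p/(p-1)}} \le C\|\nabla\varphi_R\cdot u\|_{L^{p/(p-1)}}$, and one must verify that the pressure, recovered from the equation via the Helmholtz decomposition, pairs correctly so that its contribution also collapses to an annulus integral of the stated homogeneity rather than introducing a global term.
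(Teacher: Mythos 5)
Your proposal follows essentially the same route as the paper: a Caccioppoli-type estimate obtained by testing the stationary equation against a cutoff of $u$ corrected by the Bogovskii-type field $\W_R$ with $\dv \W_R=\nabla\varphi_R\cdot u$, then H\"older on the annulus together with $\Vert\nabla\varphi_R\Vert_{L^\infty}\le c/R$ and $\vert\mathcal{C}(R/2,R)\vert\sim R^3$ to produce the factor $R^{2-\frac{9}{p}}$. The one point you flag as delicate is actually immediate in the paper's formulation: testing against $\varphi_R u-\W_R$ makes the test function exactly divergence-free, so $\int\nabla\pi\cdot(\varphi_R u-\W_R)\,dx=-\int\pi\,(\nabla\varphi_R\cdot u-\dv\W_R)\,dx=0$ and no estimate on the pressure (nor any Helmholtz decomposition) is needed; the $W^{1,q}$ bound on $\W_R$ is only used for the extra Laplacian and convection terms, with $q=\tfrac{p}{p-2}$ (the conjugate of $\tfrac{p}{2}$), not $\tfrac{p}{p-1}$.
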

\begin{proof}
A proof is almost same to that \cite{Jarrin20}, we give a sketch of
proof for the convenience of the readers. We start by introducing
the test functions $\varphi_R$ and $W_R$ as follows: we have $W_R\in
W^{1,q}(B_R)$ with $supp\,(W_R)\subset \mathcal{C}(R/2, R)$ and
\begin{equation}\label{contril-Lq-test}
\Vert \nabla W_R\Vert_{L^q(\mathcal{C}(R/2, R))}\leq c \Vert
\nabla\varphi_R\cdot u \Vert_{L^q(\mathcal{C}(R/2, R))}.
\end{equation}

Once we have defined the functions $\varphi_R$ and $W_R$ above, we
consider now the function $\varphi_R u-W_R$ and we write
\begin{equation*}\label{eq01}
 \int_{B_R} \left( -\Delta u +(\rho u \cdot \nabla)u+\nabla \pi\right)\cdot \left( \varphi_R u-W_R \right)dx=0.
\end{equation*}


First of all, since $W_R$ is a solution of problem (\ref{eq_W_R})
and $u$ is divergence free vector, note that
$$ \int_{B_R} \nabla \pi \cdot \left( \varphi_R u-W_R \right)dx=0.$$

Thus by this inequality and the identity above we can write the
following estimate:
\begin{eqnarray} \label{eq03} \nonumber
 \int_{B_{R/2}} \vert \nabla u \vert^{2}\,dx & = &
-\sum_{i,j=1}^{3} \int_{B_R} \partial_j u_i (\partial_j \varphi_R)
u_i dx + \sum_{i,j=1}^{3} \int_{B_R} (\partial_j u_i )
\partial_j(W_R)_i dx  \\ \nonumber
 & &
+ \int_{B_R}(\rho u \cdot \nabla)u\cdot \left( \varphi_R u-W_R
\right)dx:=\sum_{i=1}^3\mathcal{K}_i.\\ \nonumber
 \end{eqnarray}
The term $ \mathcal{K}_2$ in (\ref{eq03}) is estimated by
$$ \mathcal{K}_2= \sum_{i,j=1}^{3} \int_{B_R} (\partial_j u_i ) \partial_j(W_R)_i dx =\sum_{i,j=1}^{3} \int_{\mathcal{C}(R/2,R)} (\partial_j u_i ) \partial_j(W_R)_i dx.$$
Applying H\"older's inequality and \eqref{contril-Lq-test}, we have
\begin{eqnarray*}
\mathcal{K}_2  &\lesssim&  \left( \int_{\mathcal{C}(R/2,R)} \vert
\nabla
 u \vert^{\frac{p}{2}} dx \right)^{\frac{2}{p}} \left(
\int_{\mathcal{C}(R/2,R)} \vert \nabla  \W_R
\vert^{q}dx \right)^{\frac{1}{q}}\\
&\lesssim&   \left( \int_{\mathcal{C}(R/2,R)} \vert \nabla u
\vert^{\frac{p}{2}} dx \right)^{\frac{2}{p}} \left(
\int_{\mathcal{C}(R/2,R)} \vert \nabla \varphi_R \cdot u
\vert^{q} dx \right)^{\frac{1}{q}}\\
&\lesssim&   \left( \int_{\mathcal{C}(R/2,R)} \vert \nabla u
\vert^{\frac{p}{2}} dx \right)^{\frac{2}{p}} R^{2-\frac{9}{p}}
\left( \int_{\mathcal{C}(R/2,R)} \vert u \vert^{p} dx
\right)^{\frac{1}{p}}
\end{eqnarray*}
Now We study each term in sequence. Due to the property for the
function $\partial_i \varphi_R$, we know that if $\vert x \vert
> R$ then we have $supp\,(\nabla \varphi_R) \subset
\mathcal{C}(R/2,R)$, and thus it is rewritten as
$$ \mathcal{K}_1= -\sum_{i,j=1}^{3} \int_{\mathcal{C}(R/2,R)} \partial_j u_i (\partial_j \varphi_R) u_i dx.$$
Applying the Hold\"er inequalities, we have
\begin{eqnarray}\label{eq06} \nonumber
\mathcal{K}_1 &\lesssim &  \left( \int_{\mathcal{C}(R/2,R)}\vert
\nabla u \vert^{\frac{p}{2}} dx \right)^{\frac{2}{p}} \frac{1}{R}
\left( \int_{\mathcal{C}(R/2,R)} \vert u \vert^{q}
 dx\right)^{\frac{1}{q}}\\ \nonumber
 &\lesssim &  \left( \int_{\mathcal{C}(R/2,R)}\vert \nabla u
\vert^{\frac{p}{2}} dx \right)^{\frac{2}{p}} R^{2-\frac{9}{p}}
\left( \int_{\mathcal{C}(R/2,R)} \vert u \vert^{p} dx
\right)^{\frac{1}{p}}.
\end{eqnarray} For $\mathcal{K}_3$, by the integrating by parts, we
know
\begin{eqnarray*}\label{eq09} \nonumber
\mathcal{K}_3& =& -\sum_{i,j=1}^{3} \int_{B_R} (\rho u_i u_j)
\Big((\partial_j \varphi_R) u_i + \varphi_R (\partial_j u_i)
-\partial_j(W_R)_i\Big) dx
\\ \nonumber
&=& - \sum_{i,j=1}^{3}  \int_{B_R} (\rho u_i u_j) \Big((\partial_j
\varphi_R) u_i dx -  (u_i u_j) \varphi_R (\partial_j u_i)
+  (u_i u_j) \partial_j(W_R)_i\Big) dx\\
&=& J_{1}+ J_{2}+J_{3}.
\end{eqnarray*}
Again, we estimate each term separately. In term $\mathcal{J}_{1}$,
in the previous way, it is easily checked that
\begin{eqnarray}\label{eq10}\nonumber
\mathcal{J}_{1} 
 &\lesssim & \left( \int_{\mathcal{C}(R/2,R)} \vert u\vert^{p}dx \right)^{\frac{2}{p}}  R^{2-\frac{9}{p}} \left(  \int_{\mathcal{C}(R/2,R)} \vert u \vert^{p} dx
 \right)^{\frac{1}{p}},
 \end{eqnarray}
 where we use $\|\rho\|_{L^\infty}<0$.
Next, to estimate the term $\mathcal{J}_{2}$, by the integration by
parts and divergence free condition for $u$, we know
\begin{equation*}
\mathcal{J}_{2}  = - \frac{1}{2} \sum_{i,j=1}^{3} \int_{B_R}  u_j
\varphi_R
\partial_j (u^{2}_{i}) dx= \frac{1}{2} \sum_{i,j=1}^{3}
\int_{\mathcal{C}(R/2,R)} u^{2}_{i} (\partial_j \varphi_R) u_j dx.
\end{equation*}
In the same way as $\mathcal{J}_{1}$, it follows that
\begin{equation*}\label{eq11}
\mathcal{J}_{2} \lesssim   \left( \int_{\mathcal{C}(R/2,R)} \vert
u\vert^{p}dx \right)^{\frac{2}{p}}  R^{2-\frac{9}{p}} \left(
\int_{\mathcal{C}(R/2,R)} \vert u \vert^{p} dx
\right)^{\frac{1}{p}}.
\end{equation*}
Similarly, we have
\begin{equation*}\label{eq20}
\mathcal{J}_{3} \lesssim   \left( \int_{\mathcal{C}(R/2,R)} \vert
u\vert^{p}dx \right)^{\frac{2}{p}}  R^{2-\frac{9}{p}} \left(
\int_{\mathcal{C}(R/2,R)} \vert u \vert^{p} dx
\right)^{\frac{1}{p}}.
\end{equation*}
Summing up $\mathcal{J}_{1}$--$\mathcal{J}_{3}$, we have
\[
\mathcal{K}_{3} \lesssim   \left( \int_{\mathcal{C}(R/2,R)} \vert
u\vert^{p}dx \right)^{\frac{2}{p}}  R^{2-\frac{9}{p}} \left(
\int_{\mathcal{C}(R/2,R)} \vert u \vert^{p} dx
\right)^{\frac{1}{p}}.
\]
Again, collecting up $\mathcal{K}_{1}$--$\mathcal{K}_{3}$, we
finally obtain the desired result.

\end{proof}

\begin{pfthm3}
For $1<p<r \leq q < +\infty$, we have
\begin{equation}\label{estim-Lorentz}
int_{B_R} \vert u \vert^{p} dx \leq c \, \, R^{3(1-\frac{p}{r})}
\Vert u \Vert^{p}_{L^{r,\infty}} \leq c   \, R^{3(1-\frac{p}{r})}
\Vert u \Vert^{p}_{L^{r,q}},\quad R>1,
\end{equation}
see \cite[Proposition 1.1.10]{Chamorro18}. From \eqref{Ineq-base},
we know that
 Through Lemma \ref{Prop-Base},  we write for all $R>1$
\begin{equation} \label{eq13}
\int_{B_{R/2}} \vert \nabla u \vert^2 dx\lesssim (G_{u}+S_{u})P_u,
\end{equation}
where
\[
G_{u}:=c \left( R^{\frac{6}{r}} \left( \frac{1}{R^3}
\int_{\mathcal{C}(R/2,R)} \vert \nabla u\vert^{\frac{p}{2}}dx
\right)^{\frac{2}{p}},\quad S_{u}:=R^{\frac{6}{r}}
\left(\frac{1}{R^3} \int_{\mathcal{C}(R/2,R)} \vert u\vert^{p}dx
\right)^{\frac{2}{p}} \right),
\]
\[
P_u:=R^{2-\frac{9}{r}}  \left( R^{\frac{3}{r}} \left( \frac{1}{R^3}
\int_{\mathcal{C}(R/2,R)} \vert u \vert^{p} dx
\right)^{\frac{1}{p}}\right).
\]
For  this  we introduce  the cut-off function  $\theta_R \in
\mathcal{C}^{\infty}_{0}(\bR^3)$ such that  $\theta_R =1$ on  $
\mathcal{C}(R/2,R)$, $supp\, (\theta_R) \subset \mathcal{C}(R/4,
2R)$ and $\Vert \nabla \theta_R \Vert_{L^{\infty}} \leq
\frac{c}{R}$. In the same arguments in \cite{Jarrin20}, \eqref{eq13}
yields
\begin{equation*}\label{eq18}
\int_{B_{\frac{B}{2}}} \vert \nabla u \vert^2 dx \leq c \left( \Vert
\theta_R (\nabla u)\Vert_{L^{\frac{r}{2},\frac{q}{2}}} + \Vert
\theta_R u \Vert^{2}_{L^{r,q}}\right) \, R^{2-\frac{9}{r}} \Vert u
\Vert_{L^{r,q}(\mathcal{C}(R/4,R))}.
\end{equation*}
Taking $p=4$ and $r=9/2$, it implies $u\equiv0$ and $\rho \equiv 0$
in $\bR^3$ as $R\rightarrow \infty $. The proof of Theorem
\ref{Theo:Lorentz} is complete.
\end{pfthm3}

\begin{acknowledgment}

Jae-Myoung Kim was supported by a Research Grant of Andong National
University.

\end{acknowledgment}



\begin{thebibliography}{m}

%
%
%
%



\bibitem{AGZ11} H. Abidi, G. Gui, P. Zhang, On the decay and stability to global
solutions of the 3-D inhomogeneous Navier.Stokes equations, Comm.
Pure Appl. Math. 64 (2011) 832--881.

 \bibitem{BPR}
S. Bosia, V. Pata and J. Robinson, A Weak-$L^p$ Prodi-Serrin Type
Regularity Criterion for the Navier-Stokes Equations. J. Math. Fluid
Mech.,  16 (2014), 721--725.

\bibitem{Chae2012} D. Chae, Remarks on the liouville type results for the compressible
navier-stokes equations in $\mathbb{R}^N$, {\it Nonlinearity,} {\bf
25} (2012) 1345--1349.






 \bibitem{Chamorro18}D. Chamorro. Espacios de Lebesgue y de Lorentz. Vol. 3.
hal-01801025v1 (2018).

\bibitem{Danchin04} R. Danchin, Local and global well-posedness results for flows
of inhomogeneous viscous fluids, Adv. Diff. Eq. 9 (2004) 353--386.

\bibitem{Danchin03}  R. Danchin, Density-dependent incompressible viscous fluids in
critical spaces, Proc. Roy. Soc. Edinburgh Sect. A, 133 (2003),
1311--1334

\bibitem{Galdi11} G.P. Galdi, An Introduction to the Mathematical Theory of the
Navier-Stokes Equations. Steady-State Problems, second edition,
Springer Monographs in Mathematics, Springer, New York, 2011,
xiv+1018 pp.

\bibitem{GS} Y. Giga and H. Sohr.
{ Abstract $L^p$ estimates for the Cauchy problem with applications
to the Navier-Stokes equations in exterior domains.} J. Funct. Anal.
102(1991), 72--94.

\bibitem{Kazhikov74} A.V. Kazhikov, Solvability of the initial-boundary value
problem for the equations of the motion of an inhomogeneous viscous
incompressible fluid, Dokl. Akad. Nauk SSSR 216 (1974) 1008-1010 (in
Russian).

\bibitem{Kim06} H. Kim, A blow-up criterion for the nonhomogeneous
incompressible Navier-Stokes equations, SIAM J. Math. Anal. 37
(2006) 1417-1434.

\bibitem{Jarrin20} O. Jarr\'{i}n,  A remark on the Liouville problem for stationary
Navier-Stokes equations in Lorentz and Morrey spaces. J. Math. Anal.
Appl.  486  (2020),  no. 1, 123871, 16 pp.

\bibitem{LS78}  O. Ladyzhenskaya, V. Solonnikov, Unique solvability of an initial
and boundary value problem for viscous incompressible
non-homogeneous fluids. J. Soviet Math. 9 (1978), 697--749.


\bibitem{LN21}  Z. Li, P. Niu, Notes on Liouville type theorems for the stationary compressible Navier-Stokes equations. Appl. Math. Lett.  114  (2021),
106908.

\bibitem{L2014}
D. Li, X. Yu, On some Liouville type theorems for the compressible
Navier-Stokes equations, {\it Discrete Contin Dyn Syst.,} {\bf 34}
(2014) 4719-4733.

\bibitem{LRM16} M. Loayza, M. A. Rojas-Medar. {A weak-$L^p$ Prodi-Serrin type regularity criterion for the
micropolar fluid equations.} J. Math. Phys. 57 (2016) 021512, 6 pp.


\bibitem{MXZ19} P.B. Mucha, L. Xue, X. Zheng, Between homogeneous
and inhomogeneous Navier-Stokes systems: the issue of stability. J.
Differential Equations  267  (2019),  no. 1, 307--363.

\bibitem{ON} R. O'Neil.
{Convolution operators and $L(p,q)$ spaces}. Duke Math. J. {\bf 30}
(1963) 129-142.


\bibitem{PY}
B. Pineau and  X. Yu,   A new Prodi-Serrin type regularity criterion
in velocity directions. J. Math. Fluid Mech., 20 (2018),
1737--1744.

\bibitem{Shi07} R. Shimada.
{On the $L_p-L_p$ maximal regularity for Stokes equations with Robin
boundary condition in a bounded domain.} Math. Methods Appl. Sci.
30(3) (2007), 257--289.

\bibitem{Simon90} J.Simon, Nonhomogeneous viscous incompressible fluids: Existence
of velocity, density, and pressure. SIAM J. Math. Anal. 21 (1990),
1093--1117.

\bibitem{SQ21} L. Sun, C. Qian, The global regularity for 3D inhomogeneous incompressible fluids
with vacuum, Appl. Math. Lett., 113 (2021) 106885.


\bibitem{Tr} H. Triebel.
{\it Theory of Function Spaces} Birkh\"{a}user Verlag, Basel-Boston,
1983.


\bibitem{YZ15} Z. Ye, X. Zhang, A note on blow-up criterion of strong solutions for
the 3D inhomogeneous incompressible Navier.Stokes equations with
vacuum, Math. Phys. Anal. Geom. 18 (2015) 24.





\end{thebibliography}
\end{document}